\newcommand{\BB}{{\mathbb{B}}}
\newcommand{\RR}{{\mathbb R}}
\newcommand{\thetahat}{{\hat{\theta}}}
\newcommand{\rhat}{{\hat{r}}}
\newcommand{\ainf}{{a_\infty}}
\newcommand{\ainfsq}{{a_\infty^2}}
\newcommand{\Rp}{{R^{\prime}}}
\newcommand{\Rpp}{{R^{\prime\prime}}}
\newcommand{\ubar}{{\overline{u}}}
\newcommand{\Yl}{{Y_l}}
\DeclareMathOperator{\grad}{grad}
\DeclareMathOperator{\mydiv}{div}
\DeclareMathOperator{\sign}{sign}
\newcommand{\Ls}{{\Delta_S}}
\newcommand{\Ns}{{\nabla_{\!\!S}}}
\newcommand{\sgrad}{{\grad_{\partial\BB}}}
\newcommand{\sdiv}{{\mydiv_{\partial\BB}}}
\newcommand{\surfintB}{{\int_{\partial\BB}}}
\newtheorem{thm}{Theorem}
\newtheorem{lemma}[thm]{Lemma}
\newtheorem{prop}[thm]{Proposition}
\theoremstyle{remark}
\newtheorem*{rmk}{Remark}
\title[Circular free plates]{Vibrational modes of circular free plates under tension}
\author{L. M. Chasman}
\address{Box K-67, Knox College, 2 East South Street, Galesburg, IL 61401 U.S.A.} \email{lchasman$@$knox.edu}
\date{\today}
\keywords{Free plate, bi-Laplace, Bessel functions}
\subjclass[2000]{\text{Primary 35J40. Secondary 35P15, 33C10}}
\begin{document}
\begin{abstract}
The vibrational frequencies of a plate under tension are given by the eigenvalues $\omega$ of the equation $\Delta^2u-\tau\Delta u=\omega u$. This paper determines the eigenfunctions and eigenvalues of this bi-Laplace problem on the ball under natural (free) boundary conditions. In particular, the fundamental modes --- the eigenfunctions of the lowest nonzero eigenvalue --- are identified and found to have simple angular dependence.
\end{abstract}
\maketitle

\section{Introduction}
The importance of the disk for physical isoperimetric inequalities has been recognized since 1877, when Lord Rayleigh \cite{RToS} conjectured that the first eigenvalue of the Dirichlet Laplacian on a planar domain (the fundamental tone of a fixed membrane) is bounded below by the first eigenvalue of the disk of the same area. This conjecture was later proved by Faber \cite{Faber53} and Krahn \cite{krahn25,krahn26}. The disk is also the extremal case in the Kornhauser--Stakgold upper bound for the Neumann eigenvalue (free membrane), proven by Szeg\H o \cite{S50,Serrata} and Weinberger \cite{W56} in the 1950's.

The disk further gives extremal cases for vibrating clamped plates. Plate vibrations are governed by the bi-Laplacian operator just as the Laplacian governs vibrations of membranes. The collective work of Szeg\H o \cite{S50}, Talenti \cite{T81}, and Nadirashvili \cite{N92,N95} (and Ashbaugh, Benguria, and Laugesen \cite{AB95, AL96, ABL97} in higher dimensions) established a sharp lower bound for the fundamental tone of a clamped plate --- that is, the first eigenvalue of the bi-Laplacian with boundary conditions $u=0$ and $\partial u/\partial n=0$. As in the Faber-Krahn inequality, the disk provides the lower bound on the first eigenvalue.

In this paper, we consider the free (unconstrained) plate problem, with the goal of identifying the eigenfunctions and eigenvalues of the ball, and in particular the fundamental mode. We consider the more general situation of a free plate under tension and factor the eigenvalue equation in order to show the ball eigenfunctions can be written in terms of Bessel and modified Bessel functions and spherical harmonics. We identify the fundamental tone using the plate Rayleigh quotient and the specific form of the free boundary conditions. 

Free boundary conditions are more complicated than clamped ones. This makes determination of the free circular plate eigenfunctions somewhat more difficult than for the clamped circular plate \cite[Chapter 2]{leissa}, which has only been treated in the zero tension case as far as we know (see, e.g., the survey paper \cite{ABL97}). Other boundary conditions exist, such as the hinged plate investigated by Nazarov and Sweers \cite{NS07}, and the simply supported plate. 

The results of this paper identifying the fundamental mode of the unit ball are essential to the solution of the free plate isoperimetric problem, which we present in \cite{chasman}. That result states that the fundamental tone of a free plate under tension is sharply bounded above by the fundamental tone of the ball of same volume, which is analogous to the Szg\"o-Weinberger result for free membranes.

The circular plate is also significant because it is a rare example where we are able to find the form of its eigenfunctions. The eigenvalue equation for the free plate is not separable in rectangular coordinates due to the cross-term $u_{xxyy}$ in $\Delta\Delta u$, and so we cannot easily find the eigenfunctions for the rectangular plate.

Problems with the bi-Laplacian tend to be more difficult than their second-order counterparts, because the theory of the bi-Laplacian is not nearly so well developed.  For example, the maximum principle fails for the bi-Laplacian, and solvability of the biharmonic equation in Lipschitz domains with Neumann boundary conditions was established only a few years ago by Verchota \cite{verchota}.

Other notable mathematical work on plates includes Kawohl, Levine, and Velte \cite{KLV}, who investigated the sums of low eigenvalues for the clamped plate under tension and compression, and Payne \cite{P58}, who considered both vibrating and buckling free and clamped plates and established inequalities bounding plate eigenvalues by their (free or fixed) membrane counterparts.

\section{Formulating the problem}
In this section we present the mathematical formulation of the free plate problem, summarizing known facts about existence of the spectrum, boundary conditions, and regularity of solutions as proved in \cite{chasman}. We consider dimensions $d\geq2$; the elementary one-dimensional case (free rod) is discussed in \cite[Chapter 7]{cthesis}. In this paper we consider only regions $\Omega=\BB(R)$, balls of radius $R$; we write $\BB=\BB(1)$.

The eigenvalues of the free plate under tension with shape $\BB(R)$ are characterized by the Rayleigh quotient
\begin{equation}
Q[u] = \frac{\int_{\BB(R)} |D^2 u|^2 + \tau |D u|^2\,dx}{\int_{\BB(R)} |u|^2\,dx}. \label{RQ}
\end{equation}
Here $|D^2u|=(\sum_{jk}u_{x_jx_k}^2)^{1/2}$ is the Hilbert-Schmidt norm of the Hessian matrix $D^2u$, and $Du$ denotes the gradient vector. The parameter $\tau>0$ has the physical meaning of the ratio of tension to flexural rigidity. Note in particular that we have the Hessian term $|D^2u|^2$, involving all second derivatives, rather than only those of the Laplacian term $(\Delta u)^2$; see \cite[p. 228]{weinstock}.

From this quotient, we obtain the Euler--Lagrange partial differential equation
\begin{equation}
\Delta \Delta u - \tau \Delta u = \omega u, \label{maineq}
\end{equation}
where $\omega$ is the eigenvalue, together with the natural (\emph{i.e.}, unconstrained or ``free'') boundary conditions on $\partial\BB(R)$ \cite[Proposition 7]{chasman}:
\begin{align}
&Mu := u_{rr} = 0 &\text{at $r=R$,}\label{BCb1}\\
&Vu := \tau u_r-\frac{1}{r^2}\Delta_S\left(u_r-\frac{u}{r}\right)-(\Delta u)_r = 0&\text{at $r=R$.}\label{BCb2}
\end{align}
Here $\Delta_S$ is the angular part of the Laplacian, which in two dimensions is simply $\partial^2/\partial\theta^2$.

The spectrum consists of nonnegative eigenvalues of finite multiplicity
\[
 0=\omega_0<\omega_1\leq \omega_2\leq \dots \to\infty.
\]
The corresponding eigenfunctions are smooth on $\overline{\BB(R)}$. The eigenfunction corresponding to $\omega_0=0$ is the constant function.

We call $\omega_1$ the \emph{fundamental tone} and a corresponding eigenfunction $u_1$ a \emph{fundamental mode}. 

The radial part of the eigenfunctions for the ball will be obtained in terms of ultraspherical Bessel functions. In order to state the main results of this paper, we will need to define these functions. The ultraspherical Bessel functions $j_l(z)$ of the first kind are defined in terms of the Bessel functions of the first kind, $J_\nu(z)$, as follows:
\[
j_{l}(z) = z^{-s}J_{s+l}(z)
\]
where the parameter
\[
 s = \frac{d-2}{2}
\]
depends on the dimension $d$. This notation suppresses the dependence of the $j_l$ functions on the dimension $d$, which causes no problems since the dimension $d\geq 2$ is fixed. Ultraspherical modified Bessel functions $i_{l}(z)$ of the first kind are defined analogously:
\[
i_{l}(z) = z^{-s}I_{s+l}(z),
\]
where $I_\nu$ is the modified Bessel function of the first kind.

\section{Main Results}
The aim of this paper is to find the eigenfunctions of the free plate problem on the ball and identify the fundamental mode. We will only consider $\tau>0$, corresponding to the free plate under tension. 

The first result identifies the form of the eigenfunctions of the ball of radius $R>0$.
\begin{prop}\label{eigfc} (Eigenfunctions in spherical coordinates) Let $\tau>0$ and $\omega$ be any positive eigenvalue of the free ball $\BB(R)$; that is, $\omega$ is an eigenvalue of $\Delta\Delta u-\tau\Delta u =\omega u$ under boundary conditions \eqref{BCb1} and \eqref{BCb2}. Then the corresponding eigenfunctions can be written in the form $R_l(r)\Yl(\thetahat)$, where $\Yl$ is a spherical harmonic of some integer order $l$ and $R_l$ is a linear combination of ultraspherical Bessel and modified Bessel functions,
\[
R_l(r) = j_l(ar/R)+\gamma i_l(br/R).
\]
Here the positive numbers $a$ and $b$ depend on $\tau$ and $\omega$ by $b^2-a^2=R^2\tau$ and $a^2b^2=R^4\omega$, and $\gamma$ is a real constant given by
\[
 \gamma=\frac{-a^2j_l^{\prime\prime}(a)}{b^2i_l^{\prime\prime}(b)}.
\]
\end{prop}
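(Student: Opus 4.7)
The plan is to separate variables in spherical coordinates, factor the resulting fourth-order radial operator into two second-order pieces whose regular solutions at the origin are ultraspherical Bessel and modified Bessel functions respectively, and then apply the first natural boundary condition $Mu = 0$ to pin down the ratio $\gamma$. The hypothesis $\omega>0$ is what makes the factoring quadratic have one positive and one negative root, producing a genuine Bessel--modified Bessel pair rather than two of the same type.

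First, I would exploit the rotational invariance of $\Delta^2 - \tau\Delta$ together with the boundary conditions \eqref{BCb1}, \eqref{BCb2}: spherical harmonic projections commute with both the operator and the boundary traces, so any eigenfunction is a finite sum of products $f(r)\Yl(\thetahat)$ and each summand is itself an eigenfunction for the same $\omega$. For such a product one has $\Delta(f\,\Yl) = (L_l f)\,\Yl$, where $L_l f := f'' + \tfrac{d-1}{r}f' - \tfrac{l(l+d-2)}{r^2}f$, and the PDE \eqref{maineq} reduces to the fourth-order radial ODE $L_l^2 f - \tau L_l f = \omega f$. Introducing positive constants $a,b$ by $b^2 - a^2 = R^2\tau$ and $a^2 b^2 = R^4\omega$ (uniquely determined since $-a^2/R^2$ and $b^2/R^2$ are the real roots of $t^2 - \tau t - \omega = 0$), the scalar identity
\[
L_l^2 - \tau L_l - \omega = \left(L_l + \tfrac{a^2}{R^2}\right)\left(L_l - \tfrac{b^2}{R^2}\right)
\]
factors the ODE into two second-order pieces.

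Standard Bessel theory identifies the kernel of $L_l + a^2/R^2$ as the span of $j_l(ar/R)$ and the corresponding ultraspherical Bessel function of the second kind, and the kernel of $L_l - b^2/R^2$ as the span of $i_l(br/R)$ and its modified counterpart of the second kind. Since these two-dimensional kernels intersect trivially (an overlap would force the impossible scalar identity $-a^2/R^2 = b^2/R^2$), their sum fills the full four-dimensional solution space of the composed operator. Smoothness on $\overline{\BB(R)}$ eliminates the two solutions singular at the origin, leaving $f(r) = A\,j_l(ar/R) + B\,i_l(br/R)$. The boundary condition $Mu = u_{rr}|_{r=R} = 0$ becomes $f''(R)=0$, and direct differentiation yields $A(a/R)^2 j_l''(a) + B(b/R)^2 i_l''(b) = 0$; normalizing $A=1$ then gives $\gamma = -a^2 j_l''(a)/(b^2 i_l''(b))$ as claimed. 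The second boundary condition $Vu = 0$ is not needed at this stage --- it is what later selects which positive $\omega$ actually occur as eigenvalues, but this holds automatically under the hypothesis of the proposition.

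The main obstacle I expect is the factorization step: one must verify that the two second-order kernels are transverse and together span the full four-dimensional kernel of the composed operator. This reduces to the observation that $-a^2/R^2$ and $b^2/R^2$ are distinct (being of opposite sign) together with a check that the four standard Bessel and modified Bessel functions of first and second kind are linearly independent, which follows from their distinct leading behaviors at $r=0$.
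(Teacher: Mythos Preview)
Your approach is essentially the same as the paper's: separate variables via commutation of $\Delta_S$ with the operator, factor the fourth-order equation into two second-order pieces with roots $-a^2/R^2$ and $b^2/R^2$, identify the radial solutions as ultraspherical Bessel and modified Bessel functions, discard the second-kind solutions by regularity, and read off $\gamma$ from the boundary condition $u_{rr}=0$. The paper factors the full Laplacian $(\Delta+a^2)(\Delta-b^2)$ rather than the radial operator $L_l$, and justifies the separation by simultaneous diagonalization of $\Delta_S$ and $\Delta^2-\tau\Delta$ on the finite-dimensional eigenspace, but these are equivalent formulations of your argument.

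One point where you are slightly more casual than the paper: when you write ``smoothness eliminates the two solutions singular at the origin,'' you must rule out the possibility that a nontrivial combination $B\,n_l(ar/R)+D\,k_l(br/R)$ has cancelling singularities and is itself smooth (with the additional constraints $R(0)=0$ for $l=1$ and $R'(0)=0$ for $l=0$). The paper isolates this as a separate lemma (Lemma~\ref{no2ndkind}) and calls the proof ``technical,'' deferring it to the thesis; your remark about ``distinct leading behaviors at $r=0$'' is the right idea, but it is aimed at linear independence rather than at excluding smooth combinations of the singular solutions, so you should make that step explicit.
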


This result gives us the forms of eigenfunctions given an eigenvalue $\omega$. However, we do not know the eigenvalues, and instead wish to find them. The next result, a consequence of the proof of Proposition~\ref{eigfc}, allows us to compute the eigenvalues of the ball:
\begin{prop}\label{eigval} (Eigenvalues) Fix $\tau>0$. Then a real number $\omega$ is an eigenvalue of the free plate of shape $\BB(R)$ if and only if $\omega = 0$ or there exists an integer $l\geq 0$ and positive real constants $a$, $b$ such that $a^2b^2=R^4\omega$, $b^2-a^2=R^2\tau$, and
\begin{align*}
 W_l(a)&:=a^2j_l^{\prime\prime}(a)\left(-a^2bi_l^\prime(b)+l(l+d-2)\Big(bi_l^\prime(b)-i_l(b)\Big)\right)\\
&\qquad-b^2i_l^{\prime\prime}(b)\left(ab^2j_l^\prime(a)+l(l+d-2)\Big(aj_l^\prime(a)-j_l(a)\Big)\right)=0.
\end{align*}
\end{prop}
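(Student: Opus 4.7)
The plan is to reduce the eigenvalue equation \eqref{maineq} with boundary conditions \eqref{BCb1}, \eqref{BCb2} to the scalar condition $W_l(a)=0$ by substituting the form of the eigenfunction produced by Proposition~\ref{eigfc} into the second boundary condition. The case $\omega=0$ is handled separately by the constant eigenfunction mentioned in Section~2, so from now on I assume $\omega>0$.

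For the forward direction, Proposition~\ref{eigfc} already guarantees that any such eigenfunction has the form $u=R_l(r)Y_l(\thetahat)$ with $R_l(r)=j_l(ar/R)+\gamma i_l(br/R)$, where $a,b$ are determined by $b^2-a^2=R^2\tau$ and $a^2b^2=R^4\omega$, and where $\gamma=-a^2j_l^{\prime\prime}(a)/(b^2i_l^{\prime\prime}(b))$ is forced by the first boundary condition $u_{rr}(R)=0$. It remains to encode the second boundary condition \eqref{BCb2} as a single scalar equation in $a$. The key computational ingredients are: (i) $\Delta_S Y_l=-l(l+d-2)Y_l$; (ii) the operator identity $\Delta\Delta-\tau\Delta=(\Delta+a^2/R^2)(\Delta-b^2/R^2)$, which holds exactly when $b^2-a^2=R^2\tau$ and $a^2b^2=R^4\omega$; and (iii) the consequences $\Delta(j_l(ar/R)Y_l)=-(a/R)^2 j_l(ar/R)Y_l$ and $\Delta(i_l(br/R)Y_l)=(b/R)^2 i_l(br/R)Y_l$, which let us compute $(\Delta u)_r$ at $r=R$ explicitly.

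With these, I evaluate each piece of $Vu$ at $r=R$, divide out $Y_l$, and multiply by $R^3$. The term $\tau R^2\,[aj_l^\prime(a)+\gamma b i_l^\prime(b)]$ combines with $[a^3 j_l^\prime(a)-\gamma b^3 i_l^\prime(b)]$ coming from $(\Delta u)_r$, and using $\tau R^2=b^2-a^2$ collapses to $ab^2 j_l^\prime(a)-\gamma a^2 b i_l^\prime(b)$. The angular term contributes $l(l+d-2)[aj_l^\prime(a)-j_l(a)+\gamma(bi_l^\prime(b)-i_l(b))]$. Setting the sum to zero and substituting the expression for $\gamma$ from \eqref{BCb1} (then clearing the denominator $b^2 i_l^{\prime\prime}(b)$) yields exactly $W_l(a)=0$.

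For the converse, given integer $l\geq0$ and positive $a,b$ with $a^2b^2=R^4\omega$, $b^2-a^2=R^2\tau$, and $W_l(a)=0$, I form $u=R_l(r)Y_l(\thetahat)$ with $\gamma$ as above. The factorization in (ii) shows $u$ solves \eqref{maineq} with eigenvalue $\omega=a^2b^2/R^4$; the definition of $\gamma$ delivers \eqref{BCb1}; and the computation just performed, read in reverse, delivers \eqref{BCb2}. The principal obstacle is purely bookkeeping: keeping track of the three contributions to $Vu$ and applying the identity $b^2-a^2=R^2\tau$ at the right moment so that the $j_l^\prime,i_l^\prime$ coefficients consolidate to the compact form displayed in $W_l$. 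No genuinely new analytic input beyond Proposition~\ref{eigfc} and standard Bessel function calculus is required.
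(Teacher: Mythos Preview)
Your proposal is correct and follows essentially the same approach as the paper: invoke Proposition~\ref{eigfc} for the form of the eigenfunction, use $\Delta(j_l(ar)Y_l)=-a^2j_l(ar)Y_l$ and $\Delta(i_l(br)Y_l)=b^2i_l(br)Y_l$ to evaluate the $V$-boundary condition, and apply $\tau=b^2-a^2$ to collapse the first-derivative coefficients into the displayed form of $W_l$. The only cosmetic difference is that the paper packages the two boundary conditions as a $2\times2$ homogeneous system in the coefficients of $j_l$ and $i_l$ and defines $W_l$ as the determinant $\det\begin{pmatrix}Mj_l&Mi_l\\Vj_l&Vi_l\end{pmatrix}_{r=1}$, whereas you first fix $\gamma$ from $Mu=0$ and then substitute into $Vu=0$; the resulting computation is identical.
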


\begin{rmk} This proposition gives us a way to calculate eigenvalues numerically. Given $\tau$ and a nonnegative integer $l$, the roots of $W_l(a)$ will determine eigenvalues by the relation $\omega=a^2(a^2+\tau)$.
\end{rmk}

We are particularly interested in identifying the fundamental tone and mode of the ball, because the proof of the free plate isoperimetric inequality in \cite{chasman} uses trial functions constructed from that fundamental mode. Thus the following theorem is essential to the proof in \cite{chasman}.

\begin{thm} \label{thm2} (Fundamental mode of the ball) For $\tau>0$, the fundamental modes of the  ball $\BB(R)$ can be written as linear combinations of
\[
u_1(r,\thetahat)=\Big(j_1(ar/R)+\gamma i_1(br/R)\Big)Y_1(\thetahat),
\]
with $a$, $b$, $\gamma$ real constants, with $a$ and $b$ positive and depending on $\tau$ and $\omega_1$ as follows: $b^2-a^2=R^2\tau$ and $a^2b^2=R^4\omega_1$, and $\gamma$ given by
\[
 \gamma=\frac{-a^2j_l^{\prime\prime}(a)}{b^2i_l^{\prime\prime}(b)}.
\]
In particular, in dimension 2, the fundamental modes are linear combinations of
\[
 u_1=\Big(J_1(ar/R)+\gamma I_1(br/R)\Big)\begin{cases}\sin(\theta)\\ \cos(\theta)\end{cases}.
\]
\end{thm}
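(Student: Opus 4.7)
The plan is to reduce the problem, via Proposition~\ref{eigfc}, to identifying which angular order $l$ yields the smallest positive eigenvalue. By that proposition, every positive eigenvalue has eigenfunctions of the form $R_l(r)\Yl(\thetahat)$ for some integer $l\geq 0$. Since spherical harmonics of different orders are $L^2$-orthogonal on the sphere, the spectrum decomposes as a disjoint union of $l$-indexed sub-spectra $\{\omega_{l,k}\}_{k\geq 1}$, and $\omega_1 = \min\{\omega_{l,k} : \omega_{l,k}>0\}$. Once this minimum is shown to occur at $l=1$, the stated form of $u_1$ is just Proposition~\ref{eigfc} with $l=1$; in $d=2$ we have $s=0$, so $j_1=J_1$, $i_1=I_1$, and $Y_1\in\mathrm{span}\{\sin\theta,\cos\theta\}$, yielding the explicit two-dimensional formula.

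To pin down $l=1$ against $l\geq 2$, I would substitute $u=R(r)\Yl(\thetahat)$ into the Rayleigh quotient \eqref{RQ}, use $\Ls\Yl=-l(l+d-2)\Yl$ together with standard identities for covariant derivatives of spherical harmonics, and integrate out the angular variables. This produces a one-dimensional Rayleigh quotient $Q_l[R]$ whose coefficients depend on $l$ only through $\mu(l):=l(l+d-2)$. A careful expansion of $|D^2u|^2$ in spherical coordinates should show that the $l$-dependent terms enter with non-negative weights (contributions proportional to $\mu(l)$ and $\mu(l)^2$ appear with positive coefficients), so that $Q_l[R]\geq Q_1[R]$ for any admissible $R$ and $l\geq 2$. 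This yields $\omega_{l,1}\geq \omega_{1,1}$ for all $l\geq 2$.

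The case $l=0$ needs a separate argument, because the constant eigenfunction already occupies $\omega_0=0$ and any higher $l=0$ eigenfunction must be $L^2$-orthogonal to constants. Here I would use the trial function $u=x_j$, which lies in the $l=1$ angular subspace and is orthogonal to constants: a direct calculation gives $|D^2 x_j|^2=0$ and $|Dx_j|^2=1$, so $Q[x_j]=\tau(d+2)/R^2$, whence $\omega_{1,1}\leq \tau(d+2)/R^2$. It then suffices to show that the smallest positive root of $W_0(a)=0$ produces an eigenvalue strictly exceeding $\tau(d+2)/R^2$, which can be verified by a sign analysis of $W_0$ on the relevant interval using standard recurrences among $j_0,j_0',j_0''$ and $i_0,i_0',i_0''$.

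The main obstacle will be the Hessian expansion in the monotonicity step: computing $|D^2(R\Yl)|^2$ in spherical coordinates requires careful bookkeeping because the plate Rayleigh quotient involves the full Hessian rather than only the Laplacian, so membrane-style shortcuts do not transfer directly and the cross-derivative contributions must be tracked explicitly. Once every $l$-dependent coefficient in $Q_l[R]$ is shown to be non-negative, monotonicity in $l\geq 1$ follows, and combined with the $l=0$ estimate the theorem is established.
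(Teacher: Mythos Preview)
Your Part~1 strategy matches the paper's, but the key claim that ``the $l$-dependent terms enter with non-negative weights'' is false. After expanding $|D^2(R\Yl)|^2$ and integrating out the angular variables, the numerator becomes (with $k=\mu(l)$)
\[
\int_0^1\!\left((R'')^2+\frac{2k+d-1}{r^2}(R')^2-\frac{6k}{r^3}RR'+\frac{k(k-d+4)}{r^4}R^2+\tau(R')^2+\frac{\tau k}{r^2}R^2\right)r^{d-1}\,dr,
\]
and the coefficient of the term linear in $k$ is not of definite sign (take, e.g., $R\equiv 1$, $d>4$, and $\tau$ small). The paper instead completes the square to isolate $\frac{2k}{r^4}(rR'-\tfrac{3}{2}R)^2+\frac{k(k-d-1/2)}{r^4}R^2$ and then checks by direct evaluation that $k(k-d-1/2)$ jumps from $-3(d-1)/2<0$ at $l=1$ to $2d(d-1/2)>0$ at $l=2$, after which the quadratic is in its increasing regime. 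So monotonicity in $l\geq 1$ does hold, but not for the reason you state, and not without this extra step.

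Your Part~2 approach diverges from the paper and, as sketched, does not close. The trial bound $\omega_{1,1}\leq\tau(d+2)/R^2$ is correct, but a sign analysis of $W_0$ on the natural interval $(0,a_\infty)$ (with $a_\infty=p_{1,1}$ the first zero of $j_1'$) only yields $a_0\geq a_\infty$ and hence $\omega_{0,1}\geq a_\infty^2(a_\infty^2+\tau)$; since $a_\infty^2<d+2$ by Proposition~\ref{propLS}, this lower bound falls \emph{below} $\tau(d+2)$ for large $\tau$, so the comparison fails. The paper avoids any numerical upper bound on $\omega_{1,1}$ altogether: it shows directly that $W_0(a)>0$ on $(0,a_\infty)$, while $W_1$ is negative near $a=0$ and positive at $a=a_\infty$ (using Lemmas~\ref{fact1}--\ref{fact3} and the inequality $a_\infty^2>d-1$), so $W_1$ already has a root in $(0,a_\infty)$, strictly before the first root of $W_0$. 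Since $\omega=a^2(a^2+\tau)$ is increasing in $a$, this gives $\omega_{1,1}<\omega_{0,1}$ for every $\tau>0$.
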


\begin{rmk} When $\tau=0$, linear functions are eigenfunctions with eigenvalue zero, and so the zero eigenvalue is $d+1$-fold degenerate; the fundamental modes will then involve higher-order spherical harmonics.

This model considered with negative values of the tension parameter $\tau$ corresponds to the free plate under compression. In this case, the Rayleigh quotient $Q$ yields both positive eigenvalues (corresponding to vibrational modes) and negative eigenvalues (corresponding to buckling modes). The forms of the eigenfunctions in the buckling modes differ from those of Proposition~\ref{eigfc}. Furthermore, for $\tau<0$ the fundamental modes can involve higher-order spherical harmonics.
\end{rmk}

\section{Ultraspherical Bessel Functions of the First Kind}
We must examine properties of $d$-dimensional ultraspherical Bessel functions, for they provide the radial part of the eigenfunctions of the ball. For more information on Bessel functions, see \cite[p.358-389]{AShandbook}. For more information on spherical and ultraspherical Bessel functions, see  \cite[p.437-455]{AShandbook} ($d=3$ only) and \cite{LS94} (all $d\geq2$).

The Bessel $j_l(z)$ functions defined previously solve the ultraspherical Bessel equation,
\begin{equation}
z^2w''+(d-1)zw'+\Big(z^2-l(l+d-2)\Big)w = 0.\label{besseleqn}
\end{equation}
The modified Bessel $j_l(z)$ functions solve the modified ultraspherical Bessel equation,
\begin{equation}
z^2w''+(d-1)zw'-\Big(z^2+l(l+d-2)\Big)w = 0. \label{modbesseleqn}
\end{equation}

\subsection*{Recurrence Relations and power series}
The Bessel functions $J_\nu$ and $I_\nu$ have a number of useful recurrence relations; see, for example, \cite[p. 361, 376]{AShandbook}. The ultraspherical Bessel functions have similar recurrence relations, all of which follow from the definition and application of the corresponding ordinary Bessel recurrence relations.
\begin{align}
\frac{d-2+2l}{z}j_l(z) &= j_{l-1}(z)+j_{l+1}(z)\label{j1}\\
j_l^\prime(z) &= \frac{l}{z}j_l(z)-j_{l+1}(z)\label{j2}\\
&=j_{l-1}(z)-\frac{l+d-2}{z}j_l(z)\label{j3}\\
\frac{d-2+2l}{z}i_l(z) &= i_{l-1}(z)-i_{l+1}(z)\label{i1}\\
i_l^\prime(z) &= \frac{l}{z}i_l(z)+i_{l+1}(z)\label{i2}
\end{align}
Note that if we take $d=2$, each of these simplifies to the corresponding relation for Bessel functions.

We also have recurrence relations for the second derivatives:
\begin{align}
j_l^{\prime\prime}(z) &= \left(\frac{l^2-l}{z^2}-1\right)j_l(z)+\frac{d-1}{z}j_{l+1}(z)\label{j4}\\
i_l^{\prime\prime}(z) &= \left(\frac{l^2-l}{z^2}+1\right)i_l(z)-\frac{d-1}{z}i_{l+1}(z)\label{i4}.
\end{align}
Again, when $d=2$ each recurrence relation simplifies to its two-dimensional analog.

We may also write a power series for the ultraspherical Bessel functions $j_l(z)$ and $i_l(z)$ using the series for the corresponding $J_{s+l}$ and $I_{s+l}$:
\begin{align}
 j_l(z) &= \sum_{k=0}^\infty\frac{(-1)^k\,2^{1-d/2}}{k!\,\Gamma(k+\frac{d}{2}+l)}\left(\frac{z}{2}\right)^{2k+l}\label{jseries}\\
i_l(z) &=\sum_{k=0}^\infty\frac{2^{1-d/2}}{k!\,\Gamma(k+\frac{d}{2}+l)}\left(\frac{z}{2}\right)^{2k+l} .\label{iseries}
\end{align}
By examining the power series \eqref{iseries}, it is immediate that $i_l(z)$ and its derivatives are all positive on $(0,\infty)$. Since the terms of the power series for $j_l$ and $i_l$ are the same up to a sign, we also have that the derivatives of $j_l$ are dominated by those of $i_l$:
\begin{equation}
\Big|j_l^{(m)}(z)\Big| \leq i_l^{(m)}(z)\qquad \text{for $m\geq0$, $z\geq 0$,}
\end{equation}
with equality only at $z=0$.

\subsection*{Other needed facts}
To identify the fundamental tone of the circular plate, and to prove the isoperimetric inequality for the free plate in \cite{chasman}, we will need several facts about Bessel functions and their derivatives. We begin with a result on the zeros of the $j_l^\prime(z)$.

\begin{prop}[L. Lorch and P. Szego, \cite{LS94}]\label{propLS}
Let $p_{l,k}$ denote the $k$th positive zero of $j_l^\prime(z)$. Then for $d\geq3$ and $l\geq 1$,
\[
 \frac{l(d+2l)(d+2l+2)}{d+4l+2}<\left(p_{l,1}\right)^2<l(d+2l).
\]
In particular, for $p_{1,1}$ the first zero of $j_1^\prime$, we deduce
\[
d< p_{1,1}^2< d+2.
\]
This inequality holds for all $d\geq2$.
\end{prop}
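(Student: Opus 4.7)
The plan is to deduce both inequalities from the Lorch--Szego bound when $d \geq 3$, and to handle $d = 2$ by a direct verification. Specializing the Lorch--Szego inequality at $l = 1$ gives immediately
\[
\frac{(d+2)(d+4)}{d+6} < p_{1,1}^2 < 1\cdot(d+2),
\]
which supplies the upper bound $p_{1,1}^2 < d+2$ at once. For the lower bound it remains to observe
\[
(d+2)(d+4) - d(d+6) = d^2 + 6d + 8 - d^2 - 6d = 8 > 0,
\]
so $(d+2)(d+4)/(d+6) > d$ and therefore $p_{1,1}^2 > d$.

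For $d = 2$, Lorch--Szego as stated does not apply, and I would verify the bound by hand. In this dimension $s=0$ and $j_1 = J_1$, so $p_{1,1}$ is the first positive zero of $J_1'$. Using the recurrence $J_1'(z) = J_0(z) - J_1(z)/z$ coming from \eqref{j3}, I would show $J_1'(\sqrt{2}) > 0$ and $J_1'(2) < 0$ by truncating the power series for $J_0$ and $J_1$ at these two arguments. At each argument the series are eventually alternating with monotonically decreasing terms, so a handful of partial sums produces rigorous bracketing intervals for $J_0$ and $J_1$. Together with the fact that $J_1$ is strictly increasing on $(0,p_{1,1})$ (as $p_{1,1}$ is its first critical point) and $J_1'(0) = 1/2 > 0$, the sign changes force $p_{1,1} \in (\sqrt{2},2)$, hence $p_{1,1}^2 \in (2,4) = (d, d+2)$.

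The main obstacle is precisely the $d = 2$ case, since the cited Lorch--Szego estimate does not cover it. The computation itself is elementary, but care is needed to keep all estimates in exact rational arithmetic: I would retain enough terms in each power series so that the sign of $J_1'$ at $\sqrt{2}$ and at $2$ can be certified without appealing to decimal approximations.
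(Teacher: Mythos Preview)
The paper does not prove this proposition at all: the main inequality is quoted from Lorch and Szego \cite{LS94}, and the ``in particular'' clause for $p_{1,1}$ is stated without argument. Later, when the paper needs the $d=2$ case (in the proof of Theorem~\ref{thm2}), it simply invokes the numerical value $p_{1,1}\approx 1.84$. So there is no paper-proof to compare against; your proposal supplies the missing deduction.

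Your argument is correct. The specialization to $l=1$ and the algebra $(d+2)(d+4)-d(d+6)=8>0$ handle $d\geq 3$ cleanly. For $d=2$ your plan to bracket $J_1'$ at $\sqrt{2}$ and $2$ via alternating-series remainders is sound and yields a rigorous interval for $p_{1,1}$; this is more careful than the paper's own appeal to a decimal approximation. One small simplification: since you only need $p_{1,1}^2>d-1=1$ for the application in Theorem~\ref{thm2}, you could alternatively get away with checking $J_1'(1)>0$, but your stronger bound matches the proposition as stated.
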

Recall $s=(d-2)/2$. This first lemma below gives us information on the signs of the Bessel $j_l$ functions, and will be used in \cite{chasman} to help prove the free plate isoperimetric inequality.

\begin{lemma} \label{fact1} The functions $j_l$ and $J_{s+l}$ have the same sign. In particular, for $l= 1,\dots,5$ and any $d\geq 2$, we have $j_l(z)>0$ for $z\leq p_{1,1}$.
\end{lemma}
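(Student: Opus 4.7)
The plan is to establish the two parts of the lemma separately. The sign statement follows instantly from the defining identity $j_l(z)=z^{-s}J_{s+l}(z)$: since $s=(d-2)/2\geq 0$ when $d\geq 2$, the factor $z^{-s}$ is strictly positive on $(0,\infty)$, so $j_l$ and $J_{s+l}$ share a sign there. (At $z=0$ both vanish for $l\geq 1$, and the positivity claim only asserts $j_l>0$ on the open part of $(0,p_{1,1}]$.)

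For the positivity of $j_l$ on $(0,p_{1,1}]$, I would treat $l=1$ first. The power series \eqref{jseries} shows $j_1(z)=cz+\BigO(z^3)$ with leading coefficient $c=2^{-d/2}/\Gamma(d/2+1)>0$; in particular $j_1(0)=0$ and $j_1'(0)=c>0$. Since $p_{1,1}$ is by definition the first positive zero of $j_1'$, continuity forces $j_1'>0$ throughout $(0,p_{1,1})$. Hence $j_1$ is strictly increasing on $[0,p_{1,1}]$, and consequently strictly positive on $(0,p_{1,1}]$.

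For $l=2,\dots,5$, the same monotonicity argument applies once one shows $p_{l,1}>p_{1,1}$, because $j_l'(0^+)\geq 0$ (again from the power series) and $p_{l,1}$ is the first positive zero of $j_l'$, so $j_l$ is strictly increasing on $(0,p_{l,1})\supseteq (0,p_{1,1}]$. Combining the Lorch-Szego lower bound
\[
p_{l,1}^2 > \frac{l(d+2l)(d+2l+2)}{d+4l+2}
\]
from Proposition~\ref{propLS} with the upper bound $p_{1,1}^2<d+2$, it suffices to verify the polynomial inequality
\[
l(d+2l)(d+2l+2)\geq (d+2)(d+4l+2)
\]
for each $l\in\{2,3,4,5\}$ and every $d\geq 2$. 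In every case this reduces to a quadratic in $d$ with positive leading coefficient and nonnegative value at $d=2$ (for instance, $l=2$ yields $d^2+8d+28\geq 0$), so the verification is routine. Proposition~\ref{propLS} is stated only for $d\geq 3$; the boundary case $d=2$ can be handled separately by observing that $j_l=J_l$ there and invoking the classical interlacing of first positive zeros of $J_\nu$ in $\nu$, together with the tabulated value $j_{1,1}\approx 3.83>p_{1,1}\approx 1.84$.

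The only real obstacle is the case-by-case algebraic bookkeeping in the $l=2,\dots,5$ verification, which is elementary. As a side remark, the interlacing property of Bessel zeros alone would already yield $j_l>0$ on $(0,p_{1,1}]$ for \emph{every} $l\geq 1$, so the restriction $l\leq 5$ in the statement presumably reflects only what is needed downstream in \cite{chasman}.
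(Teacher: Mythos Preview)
Your proof is correct, but the paper takes a shorter and more uniform route. After observing (as you do) that $j_l$ and $J_{s+l}$ share a sign, the paper works directly with the zeros of the \emph{functions} $J_\nu$ rather than the zeros of the \emph{derivatives} $j_l'$: since $j_1(0)=0$ and $j_1>0$ immediately to the right of the origin, its derivative must vanish before $j_1$ itself does, so $p_{1,1}$ lies strictly below the first positive zero of $j_1$, which equals the first positive zero $j_{s+1,1}$ of $J_{s+1}$. Because the first positive zeros $j_{\nu,1}$ of $J_\nu$ are increasing in $\nu$, one has $j_{s+l,1}\geq j_{s+1,1}>p_{1,1}$ for every $l\geq 1$, and since $J_{s+l}>0$ on $(0,j_{s+l,1})$ the conclusion $j_l>0$ on $(0,p_{1,1}]$ follows at once---for \emph{all} $l\geq 1$ and all $d\geq 2$, with no case analysis.

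Your approach instead compares $p_{l,1}$ to $p_{1,1}$ via the Lorch--Szego bounds of Proposition~\ref{propLS}, which forces a separate polynomial check for each $l\in\{2,3,4,5\}$ and a separate treatment of $d=2$ (where, amusingly, you end up invoking exactly the monotonicity-of-zeros argument the paper uses throughout). Your route has the virtue of squeezing additional mileage out of Proposition~\ref{propLS}, but the paper's argument is both simpler and strictly more general; indeed, your closing side remark is essentially the paper's proof.
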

\begin{proof} The first statement is immediate from the definition of the ultraspherical Bessel functions. For the second statement, we appeal to established facts of Bessel functions.
We write $j_{l,1}$ for the first nontrivial zero of the Bessel function $J_l(z)$. It is well known that $J_l(z)$ is positive on $(0,j_{l,1})$ and the zeros $j_{l,1}$ are increasing in $l$ for $l\geq 1$. Because $J_1(z)=0$ at $z=0$ and $j_{1,1}$ with no zeros between, we have the same for $j_1(z)$ and thus the first root of $j_1^\prime(z)$, $p_{1,1}$, lies between $0$ and $j_{1,1}$. Therefore for any $d\geq 2$ and any $l\geq1$, we have $J_l(z)>0$ and hence $j_l(z)>0$ on $(0,p_{1,1}]$.
\end{proof}

\begin{lemma} \label{fact1.5} We have $j_1^\prime>0$ on $(0,p_{1,1})$.
 \end{lemma}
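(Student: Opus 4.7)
The plan is to combine the power series expansion of $j_1$ with the definition of $p_{1,1}$ as the \emph{first} positive zero of $j_1^\prime$. This reduces everything to a sign check near the origin plus a continuity argument.

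First I would read off the behavior of $j_1$ near $z=0$ from the power series \eqref{jseries}. The $k=0$ term gives $j_1(z) = c_0 z + O(z^3)$ with $c_0 = 2^{-d/2}/\Gamma(d/2+1) > 0$, so differentiating term by term yields $j_1^\prime(0^+) = c_0 > 0$. By continuity of $j_1^\prime$, there is some $\varepsilon > 0$ on which $j_1^\prime(z) > 0$ for $z \in (0,\varepsilon)$.

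Next I would invoke the definition of $p_{1,1}$. Since $p_{1,1}$ is the \emph{first} positive zero of $j_1^\prime$, by definition $j_1^\prime(z) \neq 0$ for all $z \in (0, p_{1,1})$. Combined with continuity on this interval, the intermediate value theorem forbids a sign change; together with positivity on $(0,\varepsilon)$, this forces $j_1^\prime(z) > 0$ on the whole interval $(0, p_{1,1})$.

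There is no real obstacle here: the lemma is essentially a bookkeeping observation that packages the sign of $j_1^\prime$ near the origin together with the definition of its first zero, in a form convenient for later use. The only thing one has to be slightly careful about is that $j_1(0) = 0$ and $p_{1,1}$ is the first \emph{positive} zero, so one must verify that $j_1^\prime$ does not itself vanish at the origin; the power series takes care of this by producing a nonzero linear coefficient.
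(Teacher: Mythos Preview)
Your proof is correct and follows essentially the same approach as the paper: establish that $j_1'$ is positive near the origin, then invoke the definition of $p_{1,1}$ as the first positive zero together with continuity to conclude. The only cosmetic difference is that the paper infers positivity near $0$ from $j_1(0)=0$ and $j_1>0$ on $(0,j_{1,1})$, whereas you compute $j_1'(0)=c_0>0$ directly from the power series; both achieve the same thing.
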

\begin{proof}
 This follows from the observation that $j_1(z)>0$ on $(0,j_{1,1})$ with $j_1(0)=0$ and the definition of $p_{1,1}$ as the first zero of $j_1^\prime(z)$.
\end{proof}

\begin{lemma}\label{fact2} We have $j_2'>0$ on $(0,p_{1,1}]$.
\end{lemma}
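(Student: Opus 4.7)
The plan is to show that the first positive zero $p_{2,1}$ of $j_2'$ strictly exceeds $p_{1,1}$. Combined with the fact that $j_2'(z)>0$ just to the right of $0$, this will force $j_2'>0$ on all of $(0,p_{1,1}]$ by continuity and the definition of $p_{2,1}$.

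First, I would observe from the power series \eqref{jseries} that $j_2(z)=c_0 z^2+O(z^4)$ near the origin with $c_0>0$, so $j_2'(z)=2c_0 z+O(z^3)>0$ for small $z>0$. Hence $j_2'$ is positive on some maximal interval $(0,p_{2,1})$, where $p_{2,1}$ denotes its first positive zero.

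Next, I would compare $p_{2,1}$ with $p_{1,1}$. For $d\geq 3$ this is an immediate application of Proposition~\ref{propLS}: the upper bound with $l=1$ gives $p_{1,1}^2<d+2$, while the lower bound with $l=2$ gives
\[
p_{2,1}^2>\frac{2(d+4)(d+6)}{d+10}.
\]
A short computation shows $2(d+4)(d+6)-(d+2)(d+10)=d^2+8d+28>0$, so $p_{2,1}^2>d+2>p_{1,1}^2$, and therefore $p_{2,1}>p_{1,1}$. For the remaining case $d=2$, the ultraspherical Bessel functions reduce to the ordinary Bessel functions $j_l=J_l$, and the desired inequality $p_{2,1}>p_{1,1}$ is a classical fact: the first positive zero of $J_\nu'$ is strictly increasing in $\nu$ for $\nu\geq 1$ (see, e.g., Abramowitz--Stegun).

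Combining these two steps, $j_2'$ has no zero in $(0,p_{1,1}]$, and since $j_2'$ is positive near $0$, continuity gives $j_2'>0$ on the entire interval $(0,p_{1,1}]$. The only mildly delicate point is verifying the algebraic comparison of the Lorch--Szego bounds and treating $d=2$ separately, but both are essentially routine once the right tools are invoked.
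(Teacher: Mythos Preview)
Your proof is correct and follows essentially the same route as the paper: compare $p_{2,1}$ with $p_{1,1}$ via the Lorch--Szego bounds of Proposition~\ref{propLS}, reducing to the elementary inequality $2(d+4)(d+6)>(d+2)(d+10)$. The only difference is that you treat $d=2$ separately by invoking the classical monotonicity of the first zero of $J_\nu'$, whereas the paper applies the Lorch--Szego lower bound for $p_{2,1}$ uniformly in $d$ without singling out $d=2$; your version is slightly more careful on this point, since Proposition~\ref{propLS} as stated only asserts the general two-sided bound for $d\geq 3$.
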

\begin{proof} Let $p_{2,1}$ denote the first zero of $j_2'$. By Proposition~\ref{propLS}, $p_{1,1}^2<d+2$ and
\[
p_{2,1}^2 >\frac{2(d+4)(d+6)}{d+10}.
\]
Then $p_{2,1}^2-p_{1,1}^2>(d^2+8d+28)/(10+d)>0$, so that $j_2^\prime>0$ on $(0,p_{1,1}]$.
\end{proof}

\begin{lemma}\label{fact3} We have $j_1'' < 0$ on $(0,p_{1,1}]$.
\end{lemma}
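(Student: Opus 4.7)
The plan is to rearrange the Bessel equation \eqref{besseleqn} for $j_1$ as
\[
z j_1''(z) + (d-1)\, j_1'(z) = \frac{d-1 - z^2}{z}\, j_1(z),
\]
and split the interval $(0, p_{1,1}]$ according to the sign of the factor $d-1-z^2$ on the right.

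On the outer subregion $[\sqrt{d-1}, p_{1,1}]$ (which is nonempty because Proposition \ref{propLS} gives $p_{1,1}^2 > d > d-1$), the right-hand side is non-positive since $j_1 > 0$ by Lemma \ref{fact1}. Together with $j_1' \geq 0$ on $(0, p_{1,1}]$ from Lemma \ref{fact1.5} (strict except at $p_{1,1}$), this forces $z j_1''(z) \leq 0$, with strictness easy to check at both endpoints: in the open interval both terms on the left are strictly negative; at $z = p_{1,1}$ we have $j_1' = 0$, and $d - 1 - p_{1,1}^2 < 0$ then gives $j_1''(p_{1,1}) < 0$ directly. This step is routine sign-chasing.

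The inner subregion $(0, \sqrt{d-1})$ is where I expect the main obstacle: there the right-hand side is positive and $j_1, j_1' > 0$, so the Bessel identity alone cannot pin down the sign of $j_1''$. I would argue by contradiction. The power series \eqref{jseries} gives $j_1(z) = C_1 z - C_2 z^3 + O(z^5)$ with $C_1, C_2 > 0$, so $j_1''(z) < 0$ for small $z > 0$. If $j_1''$ vanished somewhere in $(0, \sqrt{d-1})$, let $z_0$ be the infimum of such zeros; then $j_1'' < 0$ on $(0, z_0)$ and $j_1''(z_0) = 0$. Differentiating the Bessel equation once and substituting $j_1''(z_0) = 0$ yields
\[
j_1'''(z_0) = -j_1'(z_0) - \frac{2}{z_0}\, j_1(z_0),
\]
which is strictly negative by Lemmas \ref{fact1} and \ref{fact1.5}. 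But then $j_1''$ is strictly decreasing at $z_0$, forcing $j_1''(z) > 0$ just to the left of $z_0$ and contradicting $j_1'' < 0$ on $(0, z_0)$. The harder part of the proof is recognizing that the inner region does not yield to direct sign analysis of the Bessel equation and must be handled by this third-derivative maneuver at a putative first zero.
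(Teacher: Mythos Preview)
Your proof is correct but takes a genuinely different route from the paper's. The paper exploits the recurrence relations: starting from \eqref{j4} with $l=1$ and then \eqref{j3} with $l=2$, one obtains the clean identity
\[
j_1''(z)=-\frac{1}{z}\,j_2(z)-j_2'(z),
\]
and concludes immediately since $j_2>0$ (Lemma~\ref{fact1}) and $j_2'>0$ (Lemma~\ref{fact2}) on $(0,p_{1,1}]$. No case split is needed, and the argument is essentially one line once the recurrence is in hand.

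Your approach instead works directly with the Bessel differential equation for $j_1$, splitting at $z=\sqrt{d-1}$ and handling the inner region by a first-zero contradiction via the third derivative. This is longer and requires the case analysis you flag as the ``harder part,'' but it has the virtue of being self-contained: you never invoke $j_2$ or Lemma~\ref{fact2}, relying only on the sign information for $j_1$ and $j_1'$ (Lemmas~\ref{fact1} and~\ref{fact1.5}) together with the power series near the origin. So the trade-off is brevity and use of the recurrence machinery (paper) versus a more elementary ODE argument that avoids shifting the index (yours). Both are sound; the recurrence route is the one to reach for when such identities are already available.
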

\begin{proof}
We see that
\begin{align*}
j_1^{\prime\prime}(z) &= \frac{d-1}{z}j_2(z)-j_1(z) &&\text{by \eqref{j4}}\\
&=-\frac{1}{z}j_2(z)-j_2^\prime(z)&&\text{by \eqref{j3} with $l=2$.}
\end{align*}
Since both $j_2$ and $j_2^\prime$ are positive on $(0,p_{1,1}]$ by the previous lemmas, we obtain $j_1^{\prime\prime}>0$ on that same interval.
\end{proof}

The next two lemmas are needed for the proof of the free plate isoperimetric inequality in \cite{chasman} but are not used in this paper. We include them here since both follow from properties of Bessel functions established in this paper.

\begin{lemma}\label{fact4} We have $j_1^{(4)}>0$ on $(0,p_{1,1}]$.
\end{lemma}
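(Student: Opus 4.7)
The plan is to express $j_1^{(4)}$ as a linear combination of low-index ultraspherical Bessel functions $j_l$ (with $l \le 5$) having coefficients of manifestly known sign, so that positivity on $(0, p_{1,1}]$ follows from Lemma~\ref{fact1}. Starting from the identity $j_1''(z) = -\tfrac{1}{z} j_2(z) - j_2'(z)$ established in the proof of Lemma~\ref{fact3}, I would differentiate twice and systematically use the first-derivative recurrence \eqref{j2} and the second-derivative recurrence \eqref{j4} to eliminate each $j_l^{(k)}$ with $k \ge 1$. Collecting terms via the three-term recurrence \eqref{j1} at $l=3$ then yields
\[
z^2 j_1^{(4)}(z) = (15 - z^2)\, j_3(z) + (d-4)\, z\, j_4(z).
\]

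For $d \ge 4$ I would apply \eqref{j1} once more at $l = 4$, that is $(d+6)\,j_4 = z(j_3 + j_5)$, to trade $z j_4$ for $z^2(j_3 + j_5)/(d+6)$. A short collection of terms gives
\[
z^2 j_1^{(4)}(z) = \frac{5\bigl[3(d+6) - 2z^2\bigr]}{d+6}\, j_3(z) + \frac{(d-4)\, z^2}{d+6}\, j_5(z).
\]
Both coefficients are non-negative on $(0, p_{1,1}]$: the first because $z^2 < d + 2$ (Proposition~\ref{propLS}) forces $2z^2 < 2(d+2) < 3(d+6)$, and the second because $d - 4 \ge 0$. Combined with $j_3, j_5 > 0$ from Lemma~\ref{fact1}, this settles every $d \ge 4$ uniformly.

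For the remaining small-dimensional cases $d \in \{2,3\}$ the coefficient $(d-4) z j_4$ is negative, so I would instead use \eqref{j1} at $l=3$ in the opposite direction, writing $z j_4 = (d+4) j_3 - z j_2$. For $d = 3$ this produces $z^2 j_1^{(4)} = z\, j_2 + (8 - z^2) j_3$, a sum of positive terms since $p_{1,1}^2 < 5 < 8$. For $d = 2$ a further application of \eqref{j1} at $l = 2$, converting $2 z j_2$ into $\tfrac{1}{2} z^2 (j_1 + j_3)$, gives $z^2 j_1^{(4)} = \tfrac{1}{2} z^2\, j_1 + \tfrac{1}{2}(6 - z^2)\, j_3$, again a sum of positive terms because $p_{1,1}^2 < 4 < 6$.

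The main obstacle is this second step: the naive rewriting $j_1^{(4)} = j_1 - 2(d-1) j_2/z + (d^2 - 1) j_3/z^2$ has a negative coefficient on $j_2/z$ for every $d \ge 2$, and no single substitution via \eqref{j1} produces an expression with manifestly non-negative coefficients uniformly in $d$. The fix is to shift the Bessel index upward (to $j_5$) when $d \ge 4$ and downward (to $j_2$ or $j_1$) when $d$ is small, which is exactly the index range $l \le 5$ supported by Lemma~\ref{fact1}.
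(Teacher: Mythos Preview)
Your proof is correct and rests on the same underlying identity as the paper's, namely
\[
z^2 j_1^{(4)}(z) = (15 - z^2)\,j_3(z) + (d-4)\,z\,j_4(z),
\]
followed by index shifts via \eqref{j1} and positivity from Lemma~\ref{fact1} and Proposition~\ref{propLS}. The difference lies in the case analysis. The paper treats four dimensional ranges ($d=2$; $d=3,4$; $5\le d\le 13$; $d\ge 14$), and in the extreme cases $d=2$ and $d\ge 14$ it must also split the interval $(0,p_{1,1}]$ into two pieces handled by different representations. Your argument instead handles all $d\ge 4$ in one stroke by pushing the index up to $j_5$, obtaining
\[
z^2 j_1^{(4)}(z) = \frac{5\bigl[3(d+6)-2z^2\bigr]}{d+6}\,j_3(z) + \frac{(d-4)z^2}{d+6}\,j_5(z),
\]
whose coefficients are manifestly nonnegative for every $z\le p_{1,1}$ since $2(d+2)<3(d+6)$; the small cases $d=2,3$ are then dispatched by shifting the index down instead. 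This avoids any subdivision of the $z$-interval and reduces the number of cases, so your organization is tidier than the paper's, though the mechanism is identical.
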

\begin{proof}
We have by \eqref{j4} that
\[
 j_1^{\prime\prime}(z)=-j_1(z)+\frac{d-1}{z}j_2(z),
\]
and so
\begin{align}
 j_1^{(4)}&=-j_1^{\prime\prime}(z)+\frac{d-1}{z}j_2^{\prime\prime}(z)-\frac{2(d-1)}{z^2}j_2^\prime(z)+\frac{2(d-1)}{z^3}j_2(z) \nonumber \\
&=j_1(z)-\frac{2(d-1)}{z}j_2(z)+\frac{d^2-1}{z^2}j_3(z) \label{fact4eq1}
\end{align}
by \eqref{j4} with $l=1$ and $l=2$, and \eqref{j2} with $l=2$.
When $d=2$, this becomes
\begin{equation}
  j_1^{(4)}(z)=\left(1-\frac{3}{z^2}\right)j_1(z)+\left(\frac{12}{z^3}-\frac{2}{z}\right)j_2(z) \label{d2case}
\end{equation}
by \eqref{j1} with $l=2$. For any $d$, \eqref{fact4eq1} gives us
\begin{align}
 j_1^{(4)}(z)&=\frac{4-d}{z}j_2(z)+\left(\frac{d^2-1}{z^2}-1\right)j_3(z)
&\quad\text{by \eqref{j1} with $l=2$}\label{dsmall}\\
&=\left(\frac{15}{z^2}-1\right)j_3(z)+\frac{d-4}{z}j_4(z) &\quad\text{by \eqref{j1} with $l=3$}\label{dbigger1}\\
&=\left(\frac{15(d+6)}{z^3}-\frac{10}{z}\right)j_4(z)+\left(1-\frac{15}{z^2}\right)j_5(z)
&\quad\text{by \eqref{j1} with $l=4$}\label{dbigger2}.
\end{align}

When $d=2$, then the first term of \eqref{dsmall} is nonnegative on $(0,p_{1,1}]$ by Lemma~\ref{fact2}. The function $j_3$ is positive on $(0,p_{1,1}]$ by Lemma~\ref{fact1}; note that since $d=2$, we have $j_3(z)=J_3(z)$. Thus we have $j_1^{(4)}(z)>0$ when $z\in(0,\sqrt{3}]\cap(0,p_{1,1}]$. However, $p_{1,1}>\sqrt{3}$, so we have only established positivity on $(0,\sqrt{3}]$.

To establish positivity on $(\sqrt{3},p_{1,1}]$ we turn to \eqref{d2case}. The first term is certainly positive on $(\sqrt{3},p_{1,1}]$. The second term is positive when both $J_2>0$ and $z<\sqrt{6}$. Because $p_{1,1}\approx 1.84$ for $d=2$, we have $p_{1,1}<\sqrt{6}$ and we are done.

When $d=3$ and $d=4$, we again examine \eqref{dsmall}. Then Lemma~\ref{fact1} together with the argument above give us $j_1^{(4)}>0$ on $(0,\sqrt{d^2-1})\cap(0,p_{1,1}]$. By Proposition~\ref{propLS} we have $p_{1,1}<\sqrt{d+2}$, which for $d=3$ and 4 is less than $\sqrt{d^2=1}$, thus proving the lemma for these $d$.

For dimensions $d\geq 5$, we turn to \eqref{dbigger1}.  The second term is positive on $(0,p_{1,1}]$ for all $d>4$ by Lemma~\ref{fact1}. Since $p_{1,1}<\sqrt{d+2}$ and $\sqrt{d+2}\leq \sqrt{15}$ for $d\leq 13$, we conclude $j_1^{(4)}(z)>0$ on $(0,p_{1,1}]$ for $d=5,\dots,13$.

Finally, suppose $d\geq 14$ and $z\in(0,p_{1,1}]$. If $z\in(0,\sqrt{15}]$, then $j_1^{(4)}(z)>0$ as above. If $z>\sqrt{15}$, then we examine \eqref{dbigger2}. Here the first term is nonnegative on $[\sqrt{15},p_{1,1}]$. The non-Bessel factor of the second term is positive on $(0,\sqrt{\frac{3}{2}(d+6)}]$ and hence on $(0,p_{1,1}]$.
\end{proof}

The next lemma provides some bounds on ultraspherical Bessel functions that will be needed in \cite{chasman} and will not be used in this paper.

Let $d_k$ denote the coefficients of the series expansion for $i_1^{\prime\prime}(z)$, so that
\[
j_1^{\prime\prime}(z) = \sum_{k=1}^\infty (-1)^k d_k z^{2k-1} \quad \text{and} \quad i_1^{\prime\prime}(z) = \sum_{k=1}^\infty d_k z^{2k-1}
\]
by \eqref{jseries} and \eqref{iseries}, where
\begin{align*}
d_k&=\frac{2k+1}{(k-1)!\Gamma(k+1+d/2)}2^{1-2k-d/2}.
\end{align*}

\begin{lemma}\label{ijbounds} We have the following bounds:
 \begin{align*}
 -d_1 z+d_2 z^3&\geq j_1^{\prime\prime}(z)&\text{for all $z\in\Big[0,\sqrt{3(d+2)/(d+5)}\Big]$,}\\
d_1 z+\frac{6}{5}d_2 z^3&\geq i_1^{\prime\prime}(z)&\text{for all $z\in\Big[0,\sqrt{3}\Big]$.}
 \end{align*}
\end{lemma}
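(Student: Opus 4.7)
The plan is to prove both bounds by manipulating the explicit power series for $j_1^{\prime\prime}$ and $i_1^{\prime\prime}$ and then controlling the remaining tails. The single analytic input needed throughout is the elementary ratio
$$\frac{d_{k+1}}{d_k} = \frac{2k+3}{4k(2k+1)(k+1+d/2)},$$
read off from the formula for $d_k$; this ratio is decreasing in $k$, so every tail estimate reduces to a ratio-test condition at the smallest relevant index.

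For the first inequality, subtracting the truncation yields
$$-d_1 z + d_2 z^3 - j_1^{\prime\prime}(z) = \sum_{k=3}^\infty (-1)^{k+1} d_k z^{2k-1}.$$
I would group the terms into consecutive pairs $z^{4m+1}(d_{2m+1} - d_{2m+2} z^2)$, each of which is nonnegative provided $z^2 \leq d_{2m+1}/d_{2m+2}$. By the monotonicity above, the binding constraint occurs at $m=1$ and reads $z^2 \leq 14(d+8)/3$. A single cross-multiplication then confirms $3(d+2)/(d+5) \leq 14(d+8)/3$ for every $d \geq 2$, so every pair is nonnegative and the sum is nonnegative, giving the claimed bound.

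For the second inequality, the same subtraction gives
$$d_1 z + \tfrac{6}{5} d_2 z^3 - i_1^{\prime\prime}(z) = z^3 \Bigl(\tfrac{1}{5} d_2 - \sum_{k=3}^\infty d_k z^{2k-4}\Bigr),$$
so it suffices to bound the positive tail by $d_2/5$ whenever $z^2 \leq 3$. Since $d_{k+1}/d_k$ is decreasing, for $z^2 \leq 3$ and $k \geq 3$ every ratio $z^2 d_{k+1}/d_k$ is at most $q := 9/[14(d+8)] \leq 9/140 < 1$, and a geometric-series comparison gives $\sum_{k \geq 3} d_k z^{2k-4} \leq d_3 z^2/(1-q) \leq 3 d_3/(1-q)$. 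Substituting $d_3/d_2 = 7/[20(d+6)]$, also obtained directly from the formula for $d_k$, reduces the target inequality to a polynomial inequality in $d$ that is easily verified for all $d \geq 2$.

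The only real obstacle I anticipate is arithmetic bookkeeping: checking the monotonicity of $d_k/d_{k+1}$ directly from the closed-form expression and then carrying through the polynomial comparisons in $d$. Since the stated intervals sit comfortably inside the larger ranges permitted by the pairing argument and by the geometric-series bound, the final inequalities in $d$ are slack with plenty of room to spare.
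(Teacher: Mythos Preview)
Your proof is correct. For the first inequality, your pairing of consecutive terms in the alternating tail is exactly the paper's approach; the only cosmetic difference is that you bound the ratios by $c_3 = d_4/d_3$ (yielding the range $z^2 \leq 14(d+8)/3$), whereas the paper uses the cruder bound $c_1$ (yielding $z^2 \leq 6(d+4)/5$) --- both comfortably contain the stated interval. For the second inequality, your route genuinely differs from the paper's: you reuse the same decreasing ratio $c_k$ to dominate the positive tail by a geometric series with ratio $q = 3c_3 \leq 9/140$, and then the target $3d_3/(1-q) \leq d_2/5$ becomes a one-line polynomial check in $d$. The paper instead bounds the tail by setting $z = \sqrt{3}$, lower-bounding $\Gamma(k + d/2)$ by $\Gamma(3 + d/2)$, and summing the resulting series explicitly to a constant multiple of $e^{3/4} - 1 - 3/4$. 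Your argument is cleaner and more uniform (one tool, the monotone ratio, handles both halves), while the paper's is more ad hoc but requires no final monotonicity-in-$d$ verification.
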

\begin{proof}
Let
\[
c_k:=\frac{d_{k+1}}{d_k} = \frac{2k+3}{2k(2k+1)(2k+d+2)}.
\]
It is easy to show that $c_k$ is decreasing for $k \geq 1$.

We use the series expansion to first prove the following upper bound on $j_1^{\prime\prime}(z)$ for $z\geq 0$:
\begin{align*}
(-d_1 z+d_2 z^3)-j_1^{\prime\prime}(z) &= \sum_{k=3}^\infty (-1)^{k+1} d_k z^{2k-1} \\
 &=\sum_{\substack{k=3 \\ k \text{odd}}}^\infty(1-c_k z^2)d_k z^{2k-1}\\
 &\geq (1-c_1 z^2)\sum_{\substack{k=3 \\ k \text{odd}}}^\infty d_k z^{2k-1},
\end{align*}
since $c_k$ is decreasing in $k$. Hence $(-d_1 z+d_2 z^3)-j_1^{\prime\prime}(z)\geq0$ for all $z$ with $0\leq z\leq 1/\sqrt{c_1}=\sqrt{6(d+4)/5}$, which is a larger range even than claimed in the first estimate in the lemma.

For $i_1^{\prime\prime}(z)$ we take a slightly different approach. We will show that on $[0,\sqrt{3}]$,
\[
\frac{1}{5} d_2 z^3 \geq \sum_{k=3}^\infty d_k z^{2k-1},
\]
 and thus
\begin{equation}
 d_1 z+\frac{6}{5}d_2 z^3 \geq i_1^{\prime\prime}(z). \label{ibound}
 \end{equation}
On $[0,\sqrt{3}]$, note that
\begin{align*}
\sum_{k=3}^\infty d_k z^{2k-1} &= \sum_{k=3}^\infty  \frac{2k+1}{(k-1)!\Gamma(k+1+d/2)}2^{-d/2}\left(\frac{z}{2}\right)^{2k-1}\\
&\leq 2^{-d/2}\left(\frac{z}{2}\right)^3\sum_{k=3}^\infty \frac{2k+1}{(k-1)!(k+d/2)\Gamma(k+d/2)}\left(\frac{\sqrt{3}}{2}\right)^{2k-4} \\
&\qquad\text{since $z\leq\sqrt{3}$}
\end{align*}
\begin{align*}
  &\leq \frac{2^{-d/2}}{\Gamma(3+d/2)} \left(\frac{z}{2}\right)^3 \sum_{k=3}^\infty \frac{2k+1}{(k-1)!(k+d/2)}\left(\frac{3}{4}\right)^{k-2}\\
&\qquad\text{since $\Gamma(z)\geq1$ and is increasing on $[2,\infty)$,}\\
  &\leq \frac{d_2}{5}z^3 \sum_{k=2}^\infty \frac{2}{k!} \left(\frac{3}{4}\right)^{k-1}\\
&\qquad\text{by the definition of $d_2$ and taking $k\mapsto k+1$}\\
  &=\frac{8}{15}d_2 z^3\left(e^{3/4}-1-3/4\right)\\
&\qquad\text{by the power series for $e^x$}\\
&\leq \frac{1}{5} d_2 z^3.
\end{align*}

Thus we have obtained our desired bound on $i_1^{\prime\prime}$.\end{proof}

\section{Ultraspherical Bessel functions of the second kind}
Each of the Bessel equations \eqref{besseleqn} and \eqref{modbesseleqn} is a second-order differential equation, and so has another set of solutions --- namely, Bessel and modified Bessel functions of the second kind. However, these functions are singular at the origin. By the regularity of plate eigenfunctions, either these singular solutions do not appear in the eigenfunctions, or they appear in a linear combination such that the singular terms cancel. Lemma~\ref{no2ndkind}, which appears below, states that in fact there is no nontrivial linear combination that meets the smoothness condition.

Ultraspherical Bessel functions of the second kind solve \eqref{besseleqn} and are defined by
\begin{align*}
n_{l}(z) &= z^{-s}N_{s+l}(z)
\end{align*}
with $s=(d-2)/2$. Here $N_\nu(z)$ denotes a Bessel function of the second kind of order $\nu$.  Each $N_\nu(z)$ is linearly independent of $J_\nu(z)$ (see, for example, \cite[p. 358]{AShandbook}), so $n_l(z)$ is linearly independent of $j_l(z)$. The functions $N_l(z)$ are often written as $Y_l(z)$; we use $N_l$ to avoid confusion with the spherical harmonics $\Yl(\thetahat)$.

Ultraspherical modified Bessel functions of the second kind solve \eqref{modbesseleqn} and are defined by
\begin{align*}
k_{l}(z) &= z^{-s}K_{s+l}(z)\\
\text{with}\quad s &= \frac{d-2}{2},
\end{align*}
where $K_\nu(z)$ denotes a modified Bessel function of the second kind of order $\nu$. The $k_l(z)$ are linearly independent of the $i_l(z)$.

\begin{lemma} \label{no2ndkind}
Let $a$, $b$ be positive constants, $a<b$.

For $d\geq 2$ and all integers $l\geq 2$, there is no nontrivial linear combination
\[
 R(z)=An_l(az)+Bk_l(bz)
\]
so that $R(z)$ is smooth at $z=0$.

For $d\geq 2$ and $l=1$, there is no nontrivial linear combination
\[
 R(z)=An_1(az)+Bk_1(bz)
\]
so that $R(z)$ is smooth at $z=0$ with $R(0)=0$.

For $d\geq 2$ and $l=0$, there is no nontrivial linear combination
\[
 R(z)=An_0(az)+Bk_0(bz)
\]
so that $R(z)$ is smooth at $z=0$ with $R'(0)=0$.
\end{lemma}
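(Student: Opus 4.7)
The plan is to compute the expansion of $R(z) = An_l(az) + Bk_l(bz)$ around $z = 0$ using the standard series for $N_{s+l}$ and $K_{s+l}$, catalogue every non-smooth term, and show that the resulting system on $(A, B)$ is overdetermined except in a handful of low-dimensional cases, where the extra conditions $R(0) = 0$ or $R'(0) = 0$ close the argument.

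First I split according to the parity of $d$, since the order $\nu = s + l = (d-2)/2 + l$ is an integer precisely when $d$ is even, and this determines whether logarithmic terms appear. For odd $d$, the singular part of $n_l(az)$ is a finite sum of purely algebraic terms $z^{2k-2s-l}$ for integers $k$ with $0 \leq k < s + l/2$, whose coefficient in $n_l(az)$ is proportional to $(-1)^k a^{2k-2s-l}$ (with a $k$-independent numerical constant) while the corresponding coefficient in $k_l(bz)$ is the same constant with $(-1)^k$ replaced by $1$ and $a$ replaced by $b$. Cancellation of the $k$-th such term therefore imposes
\[
\frac{B}{A} = (\text{const})\cdot (-1)^k \left(\frac{a}{b}\right)^{2k-2s-l}.
\]
Requiring this for two distinct values $k_1 \neq k_2$ yields $(-1)^{k_1-k_2} = (b/a)^{2(k_1-k_2)}$, which is impossible: the left side is $\pm 1$ while the right side is positive and unequal to $1$ when $a \neq b > 0$.

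In the even-$d$ case, $N_{s+l}$ and $K_{s+l}$ additionally contain $\log$-multiplied analytic factors, contributing $\bigl(\tfrac{2A}{\pi} j_l(az) + (-1)^{s+l+1} B i_l(bz)\bigr)\log z$ to the expansion of $R$. Smoothness forces this analytic coefficient to vanish identically (since $z^m \log z$ fails to be $C^m$ at $0$), and comparing the $z^l$ and $z^{l+2}$ Taylor coefficients of $j_l(az)$ and $i_l(bz)$ gives two incompatible values of $B/A$, again forcing $A = B = 0$. Counting singular algebraic conditions, one finds $\lceil (d-2+l)/2 \rceil$ of them, so at least two whenever $l \geq 2$ (any $d \geq 2$) or $l \in \{0,1\}$ and $d \geq 4$; together with the log-analysis this settles every case by smoothness alone \emph{except} $d = 3$ with $l \in \{0, 1\}$, where only one algebraic singularity appears and no log term.

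For those exceptional cases the half-integer-order Bessel functions are elementary, giving $n_0(az) \propto \cos(az)/(az)$, $k_0(bz) \propto e^{-bz}/(bz)$, and analogous expressions for $l = 1$. Cancelling the single singular power determines $B/A$ uniquely and I would then Taylor-expand the resulting smooth $R$ explicitly: for $l = 0$ one finds $R(z) = (\text{const})\cdot A \cdot [e^{-bz} - \cos(az)]/z$, whose expansion begins $-b + \tfrac{a^2+b^2}{2} z + \cdots$, so $R'(0) = 0$ forces $A(a^2+b^2)=0$ and hence $A = 0$; for $l = 1$ the parallel computation gives $R(0)$ proportional to $A(a^2 + b^2)$, so $R(0) = 0$ again forces $A = 0$. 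The main obstacle is that no single argument handles all pairs $(d, l)$ uniformly, because the nature of the singular expansion (algebraic only, algebraic plus log, or log only) changes with the parity of $d$ and the size of $s + l$; the most laborious step is the explicit $R(0)$/$R'(0)$ computation in the two $d = 3$ cases, though the unifying observation throughout is simply that two independent singular conditions on $B/A$ contradict one another whenever $a \neq b$.
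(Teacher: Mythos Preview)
The paper does not actually prove this lemma: it states that ``the proof of the lemma is technical and so has been omitted; it can be found in full in \cite[Chapter~4]{cthesis}.'' There is therefore no in-paper argument to compare your proposal against.

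That said, your outline is sound and is presumably close to what the thesis does: expand $N_\nu$ and $K_\nu$ near the origin, split according to whether $\nu=s+l$ is an integer (even $d$, logarithmic terms present) or a half-integer (odd $d$, purely algebraic singularities), and observe that any two independent singular conditions yield incompatible values of $B/A$ because $a<b$. Your identification of $d=3$, $l\in\{0,1\}$ as the only cases where smoothness alone leaves a one-parameter family---so that the extra hypothesis $R(0)=0$ or $R'(0)=0$ is genuinely needed---is correct, and the explicit half-integer computations you sketch there do give $R'(0)$ (for $l=0$) and $R(0)$ (for $l=1$) proportional to $A(a^2+b^2)$, forcing $A=0$.

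One small imprecision: the count ``$\lceil (d-2+l)/2\rceil$ singular algebraic conditions, so at least two whenever $l\ge 2$'' is off for $d=2$, $l=2$, where the only negative power in $N_2$ is $z^{-2}$. This does no damage, since for even $d$ your logarithmic argument (comparing the $z^l$ and $z^{l+2}$ coefficients of the analytic factor multiplying $\log z$) already forces $A=B=0$ without any appeal to the algebraic poles.
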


The proof of the lemma is technical and so has been omitted; it can be found in full in \cite[Chapter 4]{cthesis}.

\section{Proof of Proposition~\ref{eigfc}}
The full set of eigenfunctions for the circular free plate under tension will be found exactly in terms of Bessel and modified Bessel functions and spherical harmonics. 

We will focus on the unit ball, since the solution of our eigenvalue problem for any ball $\BB(R)$ can then be obtained by scaling, as follows. If $u(x)$ is an eigenfunction of the unit ball with eigenvalue $\omega$ and tension $\tau$, then $\tilde{u}(x)=u(x/R)$ is an eigenfunction of the ball $\BB(R)$ with eigenvalue $R^4\omega$ and tension $\tau/R^2$; see \cite[Lemma 24]{chasman}. We will establish Propositions~\ref{eigfc} and~\ref{eigval}, which give us the eigenfunctions and eigenvalues of the ball. Recall that Proposition~\ref{eigfc} states that all eigenfunctions will be of the form $R_l(r)\Yl(\thetahat)$, where $R_l$ is a linear combination (depending on $\tau$) of ultraspherical Bessel and modified Bessel functions of order $l$, and $\Yl$ is a spherical harmonic.

The eigenfunctions of the free disk with zero tension are treated in \cite[Chapter 2]{leissa}, along with treatment of the free disk with clamped and simply supported boundary conditions. In addition to finding the eigenfunctions, Leissa provides some numerical computations of eigenvalues and presents experimental data on such plates.

\subsection*{Spherical harmonics}
In the case where $\Omega$ is the ball, it is natural to consider spherical coordinates. Let $r$ be the radius and $\thetahat$ be the remaining angular information. Consider Laplace's equation $\Delta f = 0$, with $f$ a function on $\RR^d$. The Laplacian can be written in spherical coordinates as
\[
\Delta = \partial_{rr}+\frac{d-1}{r}\partial_r+\frac{1}{r^2}\Delta_S,
\]
where we give the name $\Delta_S$ to the angular part of the Laplacian. Separating variables so that $f=R(r)Y(\thetahat)$, we obtain
\[
 R''+\frac{d-1}{r}R'-\frac{l(l+d-2)}{r^2}R=0 \qquad\text{and}\qquad \Delta_S Y=-l(l+d-2)Y.
\]
The parameter $l$ appearing in the separation constant $l(l+d-2)$ must be an nonnegative integer in order for solutions to exist. The solutions to\\ $\Delta_S Y = -l(l+d-2)Y$ are called the \emph{spherical harmonics}. For each $l$, we choose a spanning set $\{\Yl\}$ of such solutions that are orthonormal in the $L^2(\partial\BB)$ inner product. Because the eigenvalues are real, the $\Yl$ may be chosen to be real-valued. However, they are traditionally chosen to be complex-valued, and so will be treated as possibly such in the proof of Theorem~\ref{thm2}.

\subsection*{Factoring the eigenfunction equation}
The proofs of Propositions~\ref{eigfc} and~\ref{eigval} involve a factoring of the eigenvalue equation akin to factoring of the characteristic equation of an ordinary differential equation.  It is notable that while the eigenvalue equation is not separable in rectangular coordinates, the factoring together with the commutativity of the spherical part of the Laplacian $\Delta_S$ with the Laplacian $\Delta$ gives us a sort of separability in spherical coordinates.

\begin{proof}[Proof of Proposition~\ref{eigfc}]
We first show that eigenfunctions can be written as a product of a radial function with a spherical harmonic, and then give the exact form of the radial part.

Write $A:=\Delta^2-\tau\Delta$. From Section~2, we know each eigenvalue $\omega$ has finite multiplicity, and so the corresponding space of eigenfunctions $X_\omega$ is finite-dimensional. Because $\Delta_S$ is independent of $r$, it commutes with the Laplacian $\Delta$ and hence with our operator $A=\Delta^2-\tau\Delta$. Thus $\Delta_S$ maps $X_\omega$ into itself. The operator $\Ls$ is symmetric, and so is diagonalizable on the finite-dimensional space $X_\omega$. Thus we have that $A$ and $\Ls$ are simultaneously diagonalizable. The eigenfunctions of $\Ls$ on $\partial\BB$ are the spherical harmonics; on $\BB$ the eigenfunctions have the form $R(r)\Yl(\thetahat)$. We can therefore choose our eigenfunctions of $A$ to have this form. 

To find the precise form of $R$, we factor the eigenvalue equation \eqref{maineq}, obtaining
\begin{equation}
(\Delta+a^2)(\Delta-b^2)u = 0, \label{PDEfactored}
\end{equation}
where $a$ and $b$ are positive real numbers satisfying $b^2=a^2+\tau$ and $\omega = a^2(a^2+\tau)$. That is, $a^2=\sqrt{(\tau/2)^2+\omega}-\tau/2$ and $b^2=\sqrt{(\tau/2)^2+\omega}+\tau/2$. The eigenfunctions $u$ will then be linear combinations of the solutions $v_1$ and $v_2$ of each factor:
\begin{equation}\label{factoredpieces}
 (\Delta+a^2)v_1=0 \qquad\text{and}\qquad(\Delta-b^2)v_2=0.
\end{equation}
Each of these is separable in spherical coordinates, with angular equation \\$\Delta_S Y=-l(l+d-2)Y$ for some nonnegative integer $l$. The radial equation for $v_1$ is a rescaling of the ultraspherical Bessel equation \eqref{besseleqn} with order $l$ and the radial equation for $v_2$ is a rescaling of the ultraspherical modified Bessel equation \eqref{modbesseleqn} with order $l$, hence
\[
 v_1=\Big(Aj_{l_1}(ar)+Bn_{l_1}(ar)\Big)Y_{l_1}\qquad\text{and}\qquad v_2=\Big(Ci_{l_2}(br)+Dk_{l_2}(br)\Big)Y_{l_2},
\]
for some nonnegative integers $l_1$, $l_2$ and real constants $A$, $B$, $C$, and $D$.
From the diagonalization argument above, we know $u=R(r)\Yl(\thetahat)$, so all the orders must agree: $l=l_1=l_2$. Thus solutions of the eigenvalue equation \eqref{maineq} have the form
\[
u(x)=R(r)T(\hat{\theta}) = \Big(Aj_l(ar)+Bn_l(ar)+Ci_l(br)+Dk_{l_2}(br)\Big)Y_l(\hat{\theta}).
\]
However, the eigenfunctions are smooth on $\BB$. The spherical harmonics $\Yl$ have no radial dependence; thus we must have the radial part $R(r)$ be a smooth function of $r\in[0,\infty)$. When $l=0$, the spherical harmonic $Y_0$ is constant, and we must also require $R'(0)=0$ in order for $u$ to be smooth. When $l=1$, the spherical harmonics $Y_1$ can be given by $x_i/r$, where $x_i$ are the coordinate functions. Then along the $x_i$-axis, $R(r)Y_1(\thetahat)=R(r)x_i/r=R(r)\sign(x_i)$. This function is continuous only if $R(r)$ vanishes at $r=0$. By Lemma~\ref{no2ndkind}, there is no nontrivial linear combination of Bessel functions of the second kind which satisfies these conditions; thus $B$ and $D$ are both zero. Denote $C/A$ by the constant $\gamma$; then we have
\[
u(x)=R(r)T(\hat{\theta}) = \Big(j_l(ar)+\gamma i_l(br)\Big)Y_l(\hat{\theta}).
\]
The constant $\gamma$ must be chosen so that $u$ satisfies the natural boundary condition $u_{rr}=0$ at $r=1$; hence we have
\[
 \gamma =\frac{-a^2j_l^{\prime\prime}(a)}{b^2i_l^{\prime\prime}(b)},
\]
and so $\gamma$ is real-valued.
\end{proof}

\section{Proof of Proposition~\ref{eigval}}
We prove Proposition~\ref{eigval} for the unit ball; the general result follows from scaling as noted previously. Most of the work has already been done for us in the proof of Proposition~\ref{eigfc}.

\begin{proof}[Proof of Proposition~\ref{eigval}]
It is immediate from the Rayleigh quotient $Q$ that $\omega=0$ is an eigenvalue with corresponding eigenfunction $u=const$.

Fix $\tau>0$. Now suppose $\omega\neq 0$ is an eigenvalue. Then by Proposition~\ref{eigfc} we must have an associated eigenfunction $u$ of the form $\Big(j_l(ar)+\gamma i_l(br)\Big)\Yl$ for some index $l$ and positive real numbers $a$ and $b$ with $b = \sqrt{a^2+\tau}$ and $\omega=a^2b^2$.

All eigenfunctions $u$ satisfy the natural boundary conditions, $Mu=0$ and \\$Vu=0$ on $\partial\Omega$, as given in \eqref{BCb1} and \eqref{BCb2} for the ball. Since by Proposition~\ref{eigfc} all eigenfunctions are linear combinations of $j_l(ar)\Yl(\thetahat)$ and $i_l(br)\Yl(\thetahat)$, we must have some nontrivial linear combination satisfying the homogeneous linear equations
\[
 \begin{cases} Mu=0\\Vu=0.
  \end{cases}
\]
Thus we must have that the determinant
\[
W_l(a):=\det\begin{pmatrix}
Mj_l(ar) & Mi_l(br) \\
Vj_l(ar) & Vi_l(br)\end{pmatrix}_{r=1}
\]
vanishes. From the first natural boundary conditions for the ball given in~\eqref{BCb1}, we have
\begin{equation} \label{Ms}
 Mj_l(ar)=a^2j_l^{\prime\prime}(ar) \qquad\text{and}\qquad Mi_l(ar)=b^2i_l^{\prime\prime}(br).
\end{equation}
The $j_l(ar)$ and $i_l(br)$ are rescaled ultraspherical Bessel and modified Bessel functions, so by the factorization \eqref{factoredpieces}, we have
\[
 \Delta j_l(ar)\Yl(\thetahat)=-a^2j_l(ar)\Yl(\thetahat) \qquad\text{and}\qquad \Delta i_l(br)\Yl(\thetahat)=b^2i_l(br)\Yl(\thetahat).
\]
Then noting $r=1$ on $\partial\BB$, the ``$V$'' boundary condition terms from in \eqref{BCb2} can be rewritten as follows:
\begin{align*}
 Vj_l(a)&= \tau aj_l^\prime(a)+l(l+d-2)\Big(aj_l^\prime(a)-j_l(a)\Big)+a^3j_l'(a)\\
 Vi_l(b)&= \tau bi_l^\prime(b)+l(l+d-2)\Big(bi_l^\prime(b)-i_l(b)\Big)-b^3i_l'(b)\\
\end{align*}

Combining the above with \eqref{Ms} and substituting $\tau=b^2-a^2$, we find
\begin{align}
W_l(a) &= a^2j_l^{\prime\prime}(a)\Big(-a^2bi_l^{\prime}(b)+l(l+d-2)(bi_l^\prime(b)-i_l(b))\Big)\nonumber\\
&\quad-b^2i_l^{\prime\prime}(b)\Big(ab^2j_l^\prime(a)+l(l+d-2)(aj_l^\prime(a)-j_l(a))\Big)\label{MVl}.
\end{align}
Because $\omega$ is an eigenvalue, the determinant $W_l(a)$ vanishes, as desired.

Now suppose we have a number $\omega$ with some integer $l\geq 0$ and positive real numbers $a$, $b$, satisfying $\omega=a^2b^2$, $b^2-a^2=\tau$, and $W_l(a)=0$ for some nonnegative integer $l$. Then the function $u=\Big(j_l(a)+\gamma i_l(b)\Big)\Yl$ satisfies the boundary conditions $Mu=0$ and $Vu=0$ for $r=1$. Furthermore, $u$ satisfies the eigenvalue equation \eqref{maineq} with eigenvalue $a^2b^2=\omega$, and so $\omega$ is an eigenvalue.
\end{proof}

\section{Proof of Theorem~\ref{thm2}}
In this section, we identify the fundamental mode of the ball for positive tension, proving Theorem~\ref{thm2}.

The proof will have two parts. First we show that for any radial function $R(r)$, the Rayleigh quotient $Q[R\Yl]$ is minimized when $l=1$, among all $l\geq1$. Then we show that of all nonconstant eigenstates with $l=0$ and $l=1$, the lowest eigenvalue corresponds to $l=1$. Note that when $l=0$, the spherical harmonic $Y_0$ is the constant function, and so $l=0$ corresponds to purely radial modes.

\begin{proof}[Proof of Theorem 3]
\emph{[Part 1.]} We will show that for any fixed smooth radial function $R$, the Rayleigh quotient $Q[R\Yl]$ is an increasing function in $l$ for all $l\geq 1$. Then by the variational characterization of eigenvalues, we see that the lowest eigenvalue corresponding to an eigenfunction with angular dependence (i.e., $l\geq 1$) occurs when $l=1$.

Considering the numerator and denominator separately, we will use the $L^2(\partial\BB)$-orthonormality of the spherical harmonics to simplify the angular parts of the integrals.

The denominator of our Rayleigh quotient is, for $u=R\Yl$,
\[
\int_\BB |R\Yl|^2\,dx = \int_0^1R^2\,r^{d-1}\,dr,
\]
and so is independent of $l$. So it suffices to show that the numerator is an increasing function of $l$ for $l\geq 1$.

Recall the numerator of the Rayleigh Quotient is
\[
N[u] = \int_\Omega |D^2u|^2+\tau |Du|^2 \, dx.
\]

We use the following pointwise identity to rewrite the Hessian term:
\begin{align}\label{hessstuff}
|D^2u|^2 &= \frac{1}{2}\Big(\Delta(|Du|^2) -D(\Delta u)\cdot D\ubar-D(\Delta\ubar)\cdot Du\Big)
\end{align}
Because our region $\Omega$ is the unit ball, we may use spherical coordinates, noting $\frac{\partial u}{\partial n} = u_r$. We then write the gradient as
\begin{equation}
D u = u_r\rhat + \frac{1}{r}\Ns u,
\end{equation}
where $\rhat = x/r$ is the unit normal. Note that $\frac{1}{r}\Ns$ is the surface gradient $\sgrad$ of the ball, and $\frac{1}{r^2}\Ls$ is the Laplacian on the boundary of the ball. We write $\sdiv$ for the surface divergence of the ball; then $\frac{1}{r^2}\Ls f=\sdiv\sgrad f$.

Then by the Divergence Theorem on $\partial\BB$, we have for any function $f$,
\begin{equation}\label{laptozero}
  \surfintB \frac{1}{r^2}\Ls f\,dS=\surfintB \sdiv\Big(\sgrad f\Big)\,dS=0.
\end{equation}

Exploiting orthonormality of the $\Yl$, we see that
\begin{equation}\label{laprayleigh}
 \surfintB |\Ns\Yl|^2\,dS=l(l+d-2)=:k.
\end{equation}
Thus when $u=R\Yl$, we use \eqref{hessstuff} to rewrite the integral of the Hessian term as follows:
\begin{align*}
\frac{1}{2}&\int_\BB\Big(\Delta(|Du|^2) -D(\Delta u)\cdot D\ubar-D(\Delta\ubar)\cdot Du\Big)\,dx\\
&=\frac{1}{2}\int_\BB\left(\frac{\partial^2}{\partial r^2}+\frac{d-1}{r}\frac{\partial}{\partial r}+\frac{1}{r^2}\Ls\right)\left((R')^2|\Yl|^2+\frac{R^2}{r^2}|\Ns\Yl|^2\right)\\
&\qquad-D\ubar\cdot D\left(u_{rr}+\frac{d-1}{r}u_r-\frac{k}{r^2}u\right)-Du\cdot D\left(\ubar_{rr}+\frac{d-1}{r}\ubar_r-\frac{k}{r^2}\ubar\right)\,dx\\
&\qquad\text{(noting $\Ls\Yl=-k\Yl$)}\\
&=\frac{1}{2}\int_\BB\left(\frac{\partial^2}{\partial r^2}+\frac{d-1}{r}\frac{\partial}{\partial r}\right)\left((R')^2|\Yl|^2+\frac{R^2}{r^2}|\Ns\Yl|^2\right)\\
&\qquad -D\ubar\cdot D\left(u_{rr}+\frac{d-1}{r}u_r-\frac{k}{r^2}u\right)-Du\cdot D\left(\ubar_{rr}+\frac{d-1}{r}\ubar_r-\frac{k}{r^2}\ubar\right)\,dx,
\end{align*}
with this last by noting that \eqref{laptozero} gives us
\[
 \int_\BB\frac{1}{r^2}\Ls\left((R')^2|\Yl|^2+\frac{R^2}{r^2}|\Ns\Yl|^2\right)\,dx=0.
\]
Expanding the integrands, our integral of the Hessian term simplifies to
\begin{align*}
\int_\BB&|\Yl|^2\left((\Rpp)^2+\frac{k+d-1}{r^2}(\Rp)^2-\frac{2k}{r^3}R\Rp\right)\\
&\qquad+|\Ns\Yl|^2\left(\frac{(\Rp)^2}{r^2}-\frac{4}{r^3}R\Rp+\frac{k-d+4}{r^4}R^2\right)\,dS
\end{align*}
Then integrating the above over $\BB$ using \eqref{laprayleigh} and the orthonormality of the $\Yl$, we obtain
\begin{align*}
\int_\BB&|D^2u|^2\,dx\\ &=\int_0^1\left((\Rpp)^2+\frac{2k+d-1}{r^2}(\Rp)^2-\frac{6k}{r^3}R\Rp+\frac{k(k-d+4)}{r^4}R^2\right)r^{d-1}\,dr\\
&=\int_0^1\left((\Rpp)^2+\frac{d-1}{r^2}(\Rp)^2+\frac{2k}{r^4}\left(r\Rp-\frac{3}{2}R\right)^2+\frac{k(k-d-1/2)}{r^4}R^2\right)r^{d-1}\,dr
\end{align*}
with this last equality by completing the square.

We now examine the gradient term in the numerator of the Rayleigh quotient:
\begin{align*}
\int_\BB|Du|^2\,dx &= \int_\BB\left(|u_r|^2+\frac{1}{r^2}|\Ns u|^2\right)\,dx\nonumber\\
&=\int_\BB\left((\Rp)^2|\Yl|^2+\frac{R^2}{r^2}|\Ns\Yl|^2\right)\,dx\nonumber\\
&=\int_0^1\left((\Rp)^2+\frac{k}{r^2}R^2\right)r^{d-1}\,dr,
\end{align*}
again by \eqref{laprayleigh}.

Combining these results, the numerator of the Rayleigh quotient can be now written with all $k$-dependence (and hence $l$-dependence) explicit:
\begin{align*}
N[u]&=\int_0^1\left(\frac{2k}{r^4}\left(r\Rp-\frac{3}{2}R\right)^2+\frac{k(k-d-1/2)}{r^4}R^2+\tau\frac{kR^2}{r^2}\right)r^{d-1}\,dr\\
&\qquad+\int_0^1\left((\Rpp)^2+\frac{d-1}{r^2}(\Rp)^2+\tau(\Rp)^2\right)r^{d-1}\,dr
\end{align*}
Recall $\tau>0$; then the above is increasing with $k$ for $k\geq d+1/2$. Recall\\ $k=l(l+d-2)$ is increasing as a function of $l$; then all terms involving $k$ are increasing functions of $l$ for $l\geq 2$. If $l=1$, the expression $k(k-d-1/2)$ becomes $-3(d-1)/2$, which is negative for all dimensions under consideration. If $l=2$, we find $k(k-d-1/2)= 2d(d-1/2)>0$. Thus each term involving $l$ is increasing as a function of $l$ for all $l\geq 1$.

Thus for any fixed radial function $R$, the numerator $N[R\Yl]$ is an increasing function of $l$ for $l\geq 1$.

\medskip
\emph{[Part 2.]} We now show that the lowest eigenvalue corresponding to an eigenfunction of the form $u_l=\Big(j_l(ar)+\gamma i_l(br)\Big)\Yl$ with $l=1$ is less than the lowest positive eigenvalue for $l=0$.  

Let $\ainf$ denote the first positive zero of $j_1'(a)$. Recall from Proposition~\ref{eigval} that $\omega$ is an eigenvalue if and only if we have some integer $l\geq 0$ and can write $\omega=a^2b^2$ for positive constants $a$ and $b$ such that $b^2-a^2=\tau$ and $W_l(a)=0$. The parameter $\tau$ is positive, so $\omega=a^2(a^2+\tau)$ increases with $a$. Therefore, to show that the lowest nonzero eigenvalue corresponds to $l=1$ and not $l=0$, we show that the first nonzero root of $W_1(a)$ is less than the first nonzero root of $W_0(a)$.

First we consider $l=0$. Here $k = 0$, so we look for solutions to:
\begin{align*}
0&=W_0(a)\\ &=a^2j_0^{\prime\prime}(a)\Big(-a^2bi_0^{\prime}(b)\Big)-b^2i_0^{\prime\prime}(b)\Big(ab^2j_0^\prime(a)\Big)\\
&=a^4bj_1^\prime(a)i_1(b)+ab^4i_1^\prime(b)j_1(a),
\end{align*}
by \eqref{j2} and \eqref{i2}. The functions $i_0(b)$ and $i_0^\prime(b)$ are positive for $b>0$, as noted earlier in this paper, by the power series expansion \eqref{iseries}. Similarly, $j_1(a)$ and $j_1'(a)$ are positive on $(0,\ainf)$ by Lemma~\ref{fact1.5}, and so $W_0(a) > 0$ on $(0,\ainf)$.

Now consider $l=1$. The constant $k=d-1$, so we have
\begin{align}
W_1(a)&=a^2j_1^{\prime\prime}(a)\Big(-a^2bi_1^\prime(b)+(d-1)(bi_1^\prime(b)-i_1(b))\Big)\nonumber\\
&\quad-b^2i_1^{\prime\prime}(b)\Big(ab^2j_1^\prime(a)+(d-1)(aj_1^\prime(a)-j_1(a))\Big)\label{W1a}\\
&=a^2bj_1^{\prime\prime}(a)\Big(-a^2i_1^\prime(b)+(d-1)i_2(b)\Big)\nonumber\\
&\quad-ab^2i_1^{\prime\prime}(b)\Big(b^2j_1^\prime(a)-(d-1)j_2(a)\Big),\label{W1b}
\end{align}
by the Bessel identities \eqref{j2} and \eqref{i2}. As $a\rightarrow 0$, the first term in \eqref{W1b} behaves like
\[
a^2\sqrt{\tau}j_1^{\prime\prime}(a)\Big((d-1)i_2(\sqrt{\tau})\Big)
\]
and so its sign is determined by that of $j_1^{\prime\prime}(a)$. By Lemma~\ref{fact3}, $j_1^{\prime\prime}(a)$ is negative for all $a\in(0,\ainf]$. Hence the first term of $W_1(a)$ is negative near $a=0$.

Similarly, as $a\rightarrow0$, we see $j_2(a)\rightarrow 0$, and so the second term behaves like
\[
-a\tau^2i_1^{\prime\prime}(\sqrt{\tau})j_1^\prime(0),
\]
which is negative by Lemma~\ref{fact1.5}. Therefore $W_1(a) < 0$ near $a=0$.

At $a = \ainf$, we have $j_1^\prime(\ainf) = 0$ by definition of $\ainf$; also note $j_1(\ainf)>0$ and $j_1^{\prime\prime}(\ainf)<0$ by Lemma~\ref{fact1} and~\ref{fact3}. Write $b_\infty=\sqrt{\ainfsq+\tau}$. By Proposition~\ref{propLS}, we have $\ainfsq>d>d-1$ for $d\geq 3$; for $d=2$, we have $\ainfsq \approx 1.84^2>d-1$. Thus $\ainfsq-(d-1)>0$, and so \eqref{W1a} gives
\begin{align*}
W_1(\ainf)&= -\ainfsq j_1^{\prime\prime}(\ainf)\Big(b_\infty(\ainfsq-(d-1))i_1^\prime(b_\infty) +(d-1)i_1(b_\infty)\Big)\\
&\qquad +b_\infty^2i_1^{\prime\prime}(b_\infty)(d-1)j_1(\ainf).
\end{align*}
Hence both terms in $W_1(\ainf)$ and since $W_1$ is continuous, it must have a zero in $(0,\ainf)$. Thus the lowest nonzero root of $W_l(a)$ occurs when $l=1$, not $l=0$, and so the lowest eigenvalue $\omega=a(a^2+\tau)$ occurs when $l=1$. \end{proof}

\section*{Acknowledgments} I am grateful to the University of Illinois
Department of Mathematics and the Research Board for support during my
graduate studies, and the National Science Foundation for graduate student support
under grants DMS-0140481 (Laugesen) and DMS-0803120 (Hundertmark) and DMS 99-83160
(VIGRE), and the University of Illinois Department of Mathematics for travel support to attend the 2007 Sectional meeting of the AMS in New York. I would also like to thank the Mathematisches Forschungsinstitut Oberwolfach for travel support to attend the workshop on Low Eigenvalues of Laplace and Schr\"{o}dinger Operators in 2009. 

Finally, I would like to thank my advisor Richard Laugesen for his support and guidance throughout my time as his student and for his assistance with perparing this paper.

\end{document}